\newtheorem{thm}{Theorem}
\newtheorem{lem}[thm]{Lemma}
\newtheorem{prop}[thm]{Proposition}
\newtheorem{defn}{Definition}
\newtheorem{coro}{Corollary}
\newcommand{\N}{{\mathbb{N}}}
\newcommand{\Z}{\mathbb{Z}}
\newcommand{\A}{{\mathcal{A}}}
\title{On the Convergence of the Density of Terras' Set}
\author{I. Assani, E. Ebbighausen}
\address{Idris Assani, University of North Carolina at Chapel Hill}
\email{assani@email.unc.edu}
\address{Ethan Ebbighausen, University of North Carolina at Chapel Hill}
\email{ejebbigh@email.unc.edu}
 \subjclass[2020]{Primary 11B75, Secondary 37A40, 37A44}
 \thanks{The second author was supported by the Honors Carolina Excellence Fund}
\begin{document}

\begin{abstract}
The Collatz Conjecture's connection to dynamical systems opens it to a variety of techniques aimed at recurrence and density results. First, we turn to density results and strengthen the result of Terras through finding a strict rate of convergence. This rate gives a preliminary result on the Triangle Conjecture, which describes a set nodes that would dominate $L^{C} = \{y \in \N \, | \, T^{k}(y) > y , \,\forall k \in \N  \}$. Second, we extend prior arguments to show that the construction of several classes of measures imply the bounded trajectories piece of the Collatz Conjecture. 
\end{abstract}

\maketitle

\section{Introduction}

The Collatz Map $T: \N \to \N$ given by 
\begin{equation}
    T(x) = 
    \begin{cases} 
    \frac{x}{2} & \text{$x$ even}   \\
    \frac{3x-1}{2} & \text{$x$ odd}
    \end{cases}
\end{equation} 
poses difficult questions on recurrence due to its complex behavior. The famous Collatz conjecture (or Syracuse conjecture, Kakutani Conjecture, Ulam's Conjecture) states that for any $n \in \N$, $T^{k}(n) = 1$ for some $k \in \N$, or that the Collatz map returns to the cycle $\{1,2\}$ for all values. This may be broken into two pieces: first that every value returns to a cycle, or the bounded trajectories conjecture, and second that the only cycle is $\{1,2\}$. 

\bigskip

It was shown in \cite{Assanipre} that the natural numbers may be broken into 3 components, $\N = C \cup D_1 \cup D_2$, where $C$ is the set of all elements of cycles of the Collatz map, $D_1$ is the set of all values returning to a cycle, and $D_2$ is the set of all nodes which do not return to a cycle. The bounded trajectories conjecture then says $D_2 = \emptyset$ and the unique cycle conjecture states $C = \{1,2\}$. The same paper gives a critereon for the former conjecture, that $D_2$ is empty if and only if there exists a finite measure $\mu$ defined on every value in $\N$ which is everywhere nonzero and power bounded with respect to the map $T$. This result was extended in \cite{SyracuseNonsing} to a more general class of maps.

\bigskip

Furthermore, \cite{SyracuseNonsing} investigated another property of the inverse Collatz map. Recall that for any point $a \in \N$, we may consider all preimages of $a$ given by $\cup_{i=0}^{\infty} T^{-i}(a)$, which may be referred to as the inverse tree generated by $a$ due to the organization of these values discussed in \cite{SyracuseNonsing}. A further extension of this set may be considered.

\begin{defn}
    Let $a \in \N$. The Chain-Tree generated by $a$ is $\cup_{i=0}^{\infty} \cup_{j=0}^{\infty} T^{-j}(T^{i}(a))$. A chain in this set is a collection of elements indexed by the integers, $\{a_{z}\}_{z \in \Z}$ where $T(a_z) = a_{z+1}$. 
\end{defn}

\begin{figure}[ht]
\centering
\includegraphics[scale=0.5]{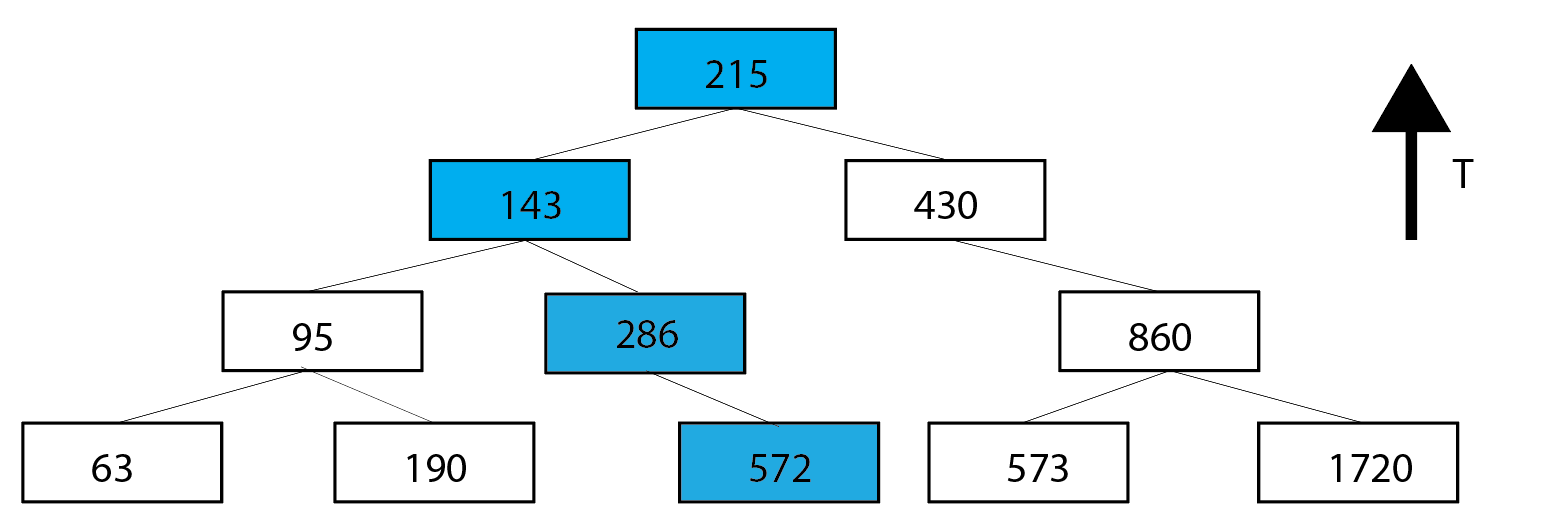}
\caption{A Snippet of a Chain-Tree with the Chain section highlighted in Blue}
\label{fig:one}
\end{figure}
\bigskip

The chain tree is stable under both the forward and inverse Collatz map, and any two nodes within the same chain tree will generate the same chain tree under this definition. The chain is an arbitrary choice indexing the ``levels" of the chain-tree (see Figure \ref{fig:one}), and certain chains, such as the ``leftmost" chain discussed in \cite{SyracuseNonsing} have structural properties. However, this labeling is enough to weaken the requirements on a measure to show that $D_2$ is nonempty. See section \ref{sec:measequiv}.

\bigskip

The same paper left an open question related to the structure of the inverse ``triangle" first posed by I. Assani. Consider the partitions of $\N$ by mod 3 remainders $\N_0, \N_1, \N_2$. For $a_0 \in \N_0 \cup \N_1$, $T^{-1}(a_0) = \{ 2a_0 \}$, and for $a_2 \in \N_2$, $T^{-1}(a_2) = \{ 2a_2, \frac{2a_2-1}{3} \}$. This offers an immediate arrangement of the inverse images. Further, each $\N_2$ node $a_2$ is of the form $3^{k}h-1$ where $h$ is not a multiple of $3$, $k \geq 1$. Using the above formula, $T^{-1}(3^{k}h-1) = \{2(3^{k}h-1), 3^{k-1}(2h)-1\}$, so that there exists a sequence of $k$ preimages $3^{k-1}(2h)-1, 3^{k-2}(2^{2}h)-1, ... ,2^{k}h-1$ which is strictly decreasing. The open conjecture states that this is the only such sequence of preimages in $\bigcup_{i=0}^{k} T^{-i}(3^{k}h-1)$. We call this the Triangle Conjecture. Precisely, fix a $\N_2$ node $3^{k}h-1$, where $k,h \geq 1$. Then, the triangle conjecture says that there exists no $a \in T^{-k}(3^{k}h-1)$ such that $a \neq 2^{k}h-1$ and $a \leq T^{l}(a)$ for all $1 \leq a \leq k$. 

Recall the set $L = \{ y \in \N \, | \, \exists k \in \N \,\, \text{such that} \,\, T^{k}(y) < y \}$ of Terras \cite{Terras1976}. The Triangle Conjecture creates a set of nodes which dominates $L^{c}$, locating possible nodes with no lesser image. The two concepts are closely intertwined, and the topics in \ref{sec:introdens} strengthen the result of Terras by finding a rate of convergence of the density of this set to $1$. This rate of convergence then provides an upper bound to the number of possible nodes in $a\in T^{-k}(3^{k}h-1)$ which have no lesser image in $\{a,T(a),...,T^{k}(a)\}$, providing a partial result on the triangle conjecture.

\begin{figure}[ht]
    \centering
    \includegraphics[scale=0.5]{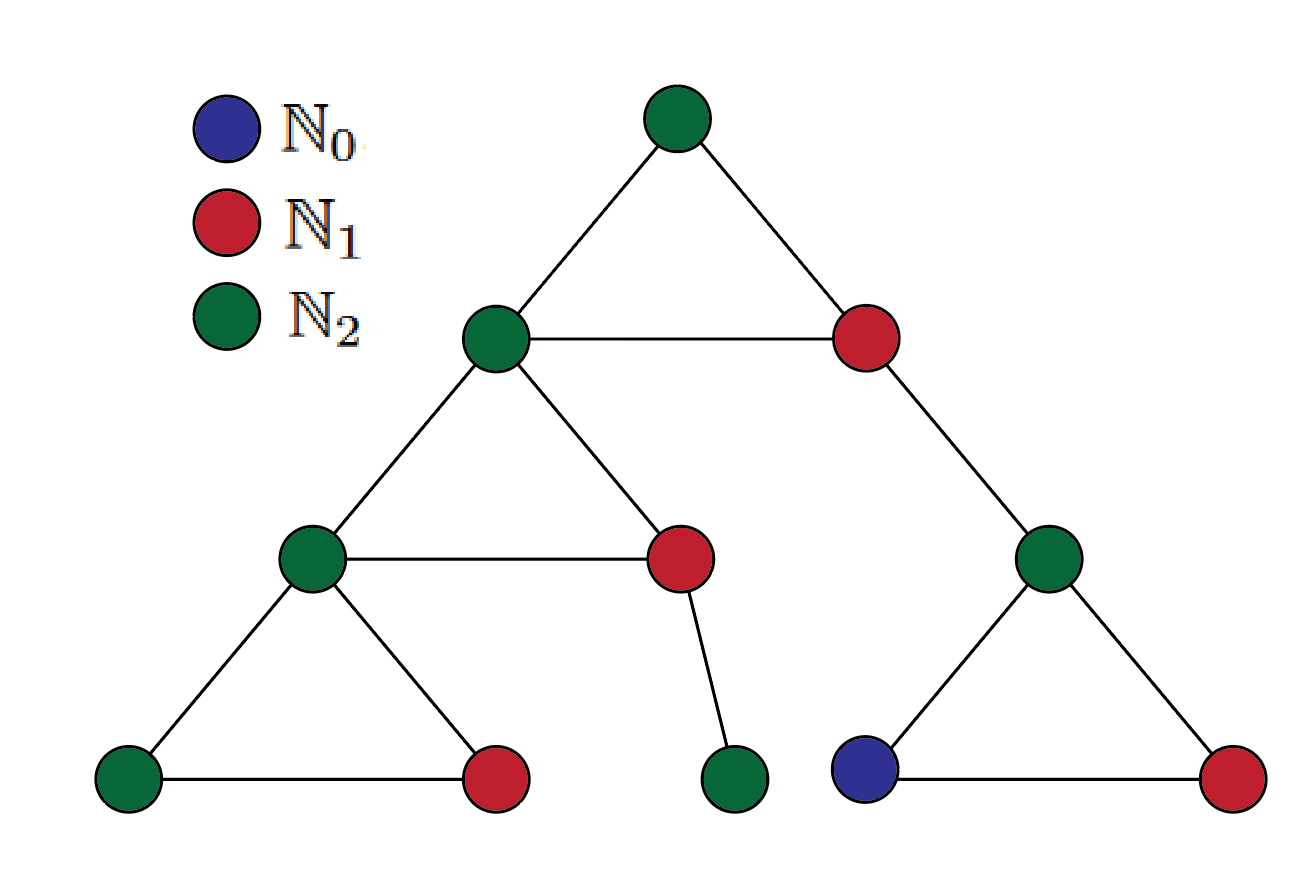}
    \caption{The Triangle for $k=3$}
    \label{fig:two}
\end{figure}
\section{An Introduction to Density Results}
\label{sec:introdens}
In lieu of results on strict cases, it is worthwhile to at least obtain some information on sets with full density in the natural numbers. An important foundational case for the Collatz map is the set $L = \{ y \in \N \, | \, \exists k \in \N \,\, \text{such that} \,\, T^{k}(y) < y \}$ introduced by R.Terras \cite{Terras1976}, which Terras proved to have density $1$. This result was extended to show the set that $M_{c} = \{ y \in \N \, | \, \exists k \in \N \,\, \text{such that} \,\, T^{k}(y) < y^{c} \}$ has density $1$ for $c \geq \log_{3}(2)$ by I.Korec \cite{Korec1994}. Recently, T.Tao has also shown that the set $D_2$ has logarithmic density $1$ \cite{TTao}. In this section, we shall focus on introducing an alternate proof of Terras' original result which gives more precisely the rate of convergence of the density of the set $L$ to $1$. First, we recall some structures original to Terras.

\medskip
\begin{defn}
    For $k,y \in \N$, define $E_{k}(y)$ to be the vector of length $k$ whose $i^{th}$ component is $1$ if $T^{i-1}(y)$ is odd, and $0$ if it is even. Define $S_{k}(y)$ to be the sum of the elements of $E_{k}(y)$. 
\end{defn}

Also, we use from \cite{Terras1976} the following proposition.

\begin{prop}
\label{prop:period}
    $E_{k}(y) = E_{k}(x)$ if and only if $x \equiv y \mod 2^{k}$. 
\end{prop}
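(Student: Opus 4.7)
The plan is to prove both directions simultaneously by induction on $k$, using as the engine the fact that, although $T$ is not a plain doubling/halving map, it interacts cleanly with powers of $2$. The essential observation is a one-step reduction lemma:
\begin{equation*}
    y \equiv x \pmod{2^{k+1}} \iff \Bigl(\, y \equiv x \pmod{2} \ \text{ and }\ T(y) \equiv T(x) \pmod{2^{k}}\,\Bigr).
\end{equation*}
I would prove this lemma first by a case split on the common parity of $x$ and $y$. If both are even, $T(y)-T(x)=(y-x)/2$, so divisibility by $2^{k+1}$ upstairs is equivalent to divisibility by $2^{k}$ downstairs. If both are odd, $T(y)-T(x)=3(y-x)/2$, and since $\gcd(3,2^k)=1$ the same equivalence holds. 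This gives the lemma in both directions.

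With the lemma in hand, the main induction is short. The base case $k=1$ is immediate because $E_{1}(y)$ is just the parity of $y$. For the inductive step, note the recursive decomposition $E_{k+1}(y)=\bigl(E_{1}(y),\,E_{k}(T(y))\bigr)$. Hence $E_{k+1}(y)=E_{k+1}(x)$ is equivalent to the two conditions $E_{1}(y)=E_{1}(x)$ and $E_{k}(T(y))=E_{k}(T(x))$. By the inductive hypothesis applied at level $k$, the second condition is equivalent to $T(y)\equiv T(x)\pmod{2^{k}}$. The one-step lemma then converts the conjunction of these two conditions into $y\equiv x\pmod{2^{k+1}}$, closing the induction and yielding both directions of the proposition at once.

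The only place requiring any care is the one-step lemma, and even there the only subtlety is the odd/odd case, where one must remember that $3$ is invertible modulo $2^k$ in order to cancel the factor of $3$. Everything else is bookkeeping.
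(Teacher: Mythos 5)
Your proof is correct. Note, however, that the paper does not prove this proposition at all: it is imported verbatim from Terras's 1976 paper with a citation, so there is no in-paper argument to compare against. Your induction is essentially the standard (and Terras's original) route: the one-step lemma is sound in both directions, since the constant term in $\frac{3x-1}{2}$ cancels in the difference $T(y)-T(x)$ and $3$ is a unit modulo $2^{k}$, and the decomposition $E_{k+1}(y)=\bigl(E_{1}(y),E_{k}(T(y))\bigr)$ matches the paper's indexing convention (the $i^{th}$ component records the parity of $T^{i-1}(y)$). The one point worth stating explicitly in a final write-up is that in the odd/odd case $y-x$ is automatically even, so $\frac{y-x}{2}$ is an integer and the cancellation of $3$ modulo $2^{k}$ legitimately yields $2^{k}\mid\frac{y-x}{2}$, hence $2^{k+1}\mid y-x$. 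As written, your argument would serve as a self-contained replacement for the citation.
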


This proposition is often called periodicity for the Collatz map, as it allows consideration of a finite number of numbers which may be extrapolated to a density. With these, we may introduce a refinement of Terras' result. 

\begin{thm}
    For fixed $k \in \N$, let $L_{k} = \{ y \in \N \, | \, \exists \, m, \,\, 1 \leq m \leq k , \,\, \text{such that} \,\, T^{m}(y) \leq y \}$. The density of $L_{k}^{C}$ is at most \begin{equation}\frac{2^{m}}{2^{k}}\prod_{n=0}^{m} \frac{2n+1}{n+1}\end{equation} where $m =  \lfloor \frac{k}{2}\rfloor $.
\end{thm}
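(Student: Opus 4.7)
The plan has three ingredients: reduce the density calculation to a residue count via Proposition \ref{prop:period}, extract a necessary combinatorial condition from the explicit Terras iterate formula, and bound the resulting count by a classical lattice-path enumeration. Observing that $\binom{2m+1}{m} = 2^m\prod_{n=0}^m (2n+1)/(n+1)$, it suffices to show that the density of $L_k^C$ is at most $\binom{2m+1}{m}/2^k$.

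Because $E_k(y)$ depends only on $y\bmod 2^k$, so does the event $y\in L_k^C$; thus $L_k^C$ is a union of residue classes modulo $2^k$ and its density equals $N_k/2^k$, where $N_k$ is the number of admissible residues. Iterating $T(y) = (3^{e_1}y - e_1)/2$ yields the Terras formula
\[
T^j(y) = \frac{3^{S_j(y)}\, y - c_j(y)}{2^j}, \qquad c_j(y) = \sum_{i=1}^j 3^{S_j - S_i}\, e_i\, 2^{i-1} \ge 0.
\]
Then $T^j(y) > y$ is equivalent to $(3^{S_j(y)} - 2^j)\,y > c_j(y)$, which can hold only if $3^{S_j(y)} > 2^j$. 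At odd indices $j = 2n+1$ the elementary inequality $2\cdot 4^n > 3^n$ rewrites as $(2n+1)\log_3 2 > n$, sharpening this to the necessary condition $S_{2n+1}(y) \ge n+1$ for every $n$ with $2n+1 \le k$.

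I would translate parities to $\pm 1$ steps via $X_i = 2e_i - 1$ and $Y_j = X_1 + \cdots + X_j$, so that the constraints $S_{2n+1}\ge n+1$ become $Y_{2n+1}\ge 1$. Combined with $Y_{2n+2} = Y_{2n+1} + X_{2n+2} \ge 0$, these force the walk $\{Y_j\}$ to be non-negative at every step $j\le k$. The classical reflection/ballot computation counts the $\pm 1$ walks of length $2m+1$ starting at $0$ and remaining $\ge 0$ as exactly $\binom{2m+1}{m}$. For $k = 2m+1$ this bounds $N_k$ directly. For $k = 2m$, the condition $S_{2m-1}\ge m$ forces $S_{2m}\ge m$, so appending $e_{2m+1}=1$ injects the admissible length-$2m$ strings into the non-negative length-$(2m+1)$ walks, yielding the same bound.

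The delicate step is the lattice-path count, since standard references typically phrase the enumeration for walks staying strictly positive or for walks returning to zero; one route to the non-negative count $\binom{2m+1}{m}$ is to telescope the ballot-formula terms $\binom{2m+1}{l} - \binom{2m+1}{l+1}$ over admissible endpoints. The arithmetic step $(2n+1)\log_3 2 > n$ and the factorial identity $(2m+1)!/(m!(m+1)!) = 2^m\prod_{n=0}^m (2n+1)/(n+1)$ are both routine, as is the observation that the finite set of small $y$ for which $(3^{S_j}-2^j)y \le c_j$ contributes only an $O(1)$ error irrelevant to density.
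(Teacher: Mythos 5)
Your proposal is correct and arrives at the stated bound, but the combinatorial engine is genuinely different from the paper's. Both arguments share the same reduction: periodicity (Proposition \ref{prop:period}) turns the density into a count of admissible $E_k$ vectors, and the necessary condition extracted is the same prefix constraint (your $S_{2n+1}(y)\ge n+1$ together with monotonicity of $S$ is exactly the paper's $S_n(y)\ge n/2$, i.e.\ the walk $Y_j$ stays non-negative). Where you diverge is in counting these vectors: the paper builds a ``delayed-phase Pascal's triangle'' indexed by the number of zeros, identifies the rightmost entries of odd rows as Catalan numbers via \cite{SHAPIRO197683}, and computes the odd row sums $2^{m}\prod_{n=0}^{m}\frac{2n+1}{n+1}$ by induction; you instead recognize the count as the classical enumeration of non-negative $\pm1$ walks of length $2m+1$, evaluate it as the central binomial coefficient $\binom{2m+1}{m}$ by a ballot/reflection telescoping, and then observe the identity $\binom{2m+1}{m}=2^{m}\prod_{n=0}^{m}\frac{2n+1}{n+1}$ (the paper proves essentially this identity inside its induction but never states the closed form). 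Your route is shorter and self-contained, avoids the external Catalan reference, and makes the even-$k$ case transparent via the injection that appends a $+1$ step --- the same move the paper makes by passing to the next odd row. Two small cautions: $L_k^C$ itself is not literally a union of residue classes mod $2^k$ (the sign of $T^m(y)-y$ is not determined by $y \bmod 2^k$), so you should phrase the reduction as containment of $L_k^C$ in the union of residues satisfying the combinatorial condition, which suffices for an upper bound; and your derivation of the necessary condition from $3^{S_j}>2^j$ leans on the $(3x-1)/2$ form of the map as literally printed, whereas the paper's own ratio argument (using $\frac{3y+1}{2y}\le 2$ for $y\ge1$) is what delivers the constraint for the $(3x+1)/2$ convention --- the resulting integer condition $S_{2n+1}\ge n+1$ is the same either way, so nothing breaks, but the justification should match whichever convention is in force.
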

\begin{proof}
To prove this result, we first construct a restriction on the $E_{k}$ vectors which any such $a$ must follow, then to count the number of these we must consider a form of a ``delayed phase Pascal's triangle" and compute the row sums.

\medskip

Consider $y \in L_{k}^{C}$. Then,  we have that $\frac{T^{n}(y)}{y} > 1$ for all $1 \leq n \leq k$. Set $l = S_{n}(y)$ and and in particular that for all such $n$ 
\begin{equation}
    (\frac{3y+1}{2y})^{l}(\frac{y}{2y})^{n-l} \geq\frac{T^{n}(y)}{T^{n-1}(y)} \frac{T^{n-1}(y)}{T^{k-2}(y)} ... \frac{T(y)}{y} = \frac{T^{n}(y)}{y} \geq 1
\end{equation} 
By taking logs, we note that 
\begin{equation}
    S_{n}(y) \geq \frac{ n \ln(2)}{\ln(3+\frac{1}{y})} 
\end{equation}
so that $S_{n}(y) \geq \frac{n}{2}$ in general since $y \geq 1$. By the periodicity of Proposition \ref{prop:period}, $y \in L_{k}^{C}$ implies there exists a unique value at most $2^{k}$ with the same $E_{k}$ vector which satisfies this inequality. The number of vectors which satisfy this inequality, divided by $2^{k}$, then bounds the density of $L_{k}^{C}$ from above.

\medskip
We now construct the modified version of Pascal's Triangle, which is inspired by counting the $E_{n}$ vectors satisfying the above inequality. 
    
First, take a map $\tau: \mathbb{N} \rightarrow \mathbb{N}$. We define a sequence of sequences, $\{ \{x^{n}_{i}\}_{i \geq 0} \}_{n \geq 0}$ where $x_{0}^{n} = 1$ for all $n$ and for $n > 1$, 
\begin{equation}
    x_{k}^{n} = 
    \begin{cases} \displaystyle{\sum_{m = \tau(k) - 1}^{n-1}} x_{k-1}^{m}  \hspace{2mm} & n-1 \geq \tau(k)-1 \\ 
    & \\
    0 \hspace{2mm} & \text{else} \\
    \end{cases}
\end{equation}
Consider the $n^{th}$ sequence to correspond to the $n^{th}$ row of the constructed triangle. For example, if we take $\tau(k) = k$, then this defines the standard Pascal Triangle. The function $\tau$ restricts when the rows may expand to have more nonzero values in the sequence. Consider, for example, the first 11 rows of the triangle constructed by $\tau(k) = 2k$ (starting at the $0^{th}$ row). 

\begin{align*}
     i=&0 \,\,\,\,\,  1 \,\,\,\,\, 2 \,\,\,\,\,\, 3 \,\,\,\,\,\,\, 4 \,\,\,\,\,\,\, 5 ... \\
    n=0 \hspace{8mm} &1 \,\,\,\, 0 \,\,\,\,\, 0 \,\,\,\,\, 0 ... \\
    n=1 \hspace{8mm}&1 \,\,\,\, 0 \,\,\,\,\, 0\,\,\,\,\, 0 ... \\
    n=2 \hspace{8mm}&1 \,\,\,\, 1 \,\,\,\,\,0 \,\,\,\,\, 0 ...\\ 
    n=3 \hspace{8mm}&1 \,\,\,\, 2 \,\,\,\,\, 0 \,\,\,\,\, 0 ...\\ 
    n=4 \hspace{8mm}&1 \,\,\,\, 3 \,\,\,\, 2 \,\,\,\,\,0 \,\,\,\,\, 0 ...\\
    n=5 \hspace{8mm}&1 \,\,\,\, 4 \,\,\,\, 5\,\,\,\,\, 0 \,\,\,\,\, 0 ...\\ 
    n=6 \hspace{8mm}&1 \,\,\,\, 5 \,\,\,\, 9 \,\,\,\,\,\, 5 \,\,\,\,\,0 \,\,\,\,\, 0 ...\\
    n=7 \hspace{8mm}&1 \,\,\,\, 6 \,\,\,\, 14 \,\,\,\, 14\,\,\,\,\, 0 \,\,\,\,\, 0 ...\\
    n=8 \hspace{8mm}&1 \,\,\,\, 7 \,\,\,\, 20 \,\,\,\, 28 \,\,\,\, 14 \,\,\,\,\,0 \,\,\,\,\, 0 ...\\
    n=9 \hspace{8mm}&1 \,\,\,\, 8 \,\,\,\, 27 \,\,\,\, 48 \,\,\,\, 42 \,\,\,\,\,0 \,\,\,\,\, 0 ...\\ 
    n=10 \hspace{8mm}&1 \,\,\,\, 9 \,\,\,\, 35 \,\,\,\, 75 \,\,\,\, 90 \,\,\,\, 42 \,\,\,\,\,0 \,\,\,\,\, 0 ...\\  
    &\vdots
\end{align*}
This generated triangle may also be considered row wise, where $x^{n}_{i} = x^{n-1}_{i} + x^{n-1}_{i-1}$ for $ i \leq \tau(n)$,  $x^{n}_{0}$ is still fixed as $1$, and all other elements are $0$. 

\bigskip
We now connect this construction to the problem. Consider the number of possible $E_{n}(y)$ vectors which satisfy $S_{n}(y) \geq \frac{n}{2}$. We may construct a recurrence relation by counting the number of $0$s possible in the vector. There is always only a single vector with no $0$s, the vector of all $1$s. If the number of $0$s, $l$, is less than $\frac{n}{2}$, then when considering a $E_{n+1}$ vector, we may take a vector with $l$ zeroes and attach another $0$ or add a $1$ at the end while still satisfying this inequality. Therefore, the number of $E_{n+1}$ vectors with $l$ zeroes, $1 \leq l \leq \frac{n}{2}$, is the number of $E_{n}$ vectors with $l$ zeroes plus the number with $l-1$ zeroes. Now, consider that if $l > \frac{n}{2}$ and if $l \leq \frac{n+1}{2}$, the number of $E_{n+1}$ vectors with $l$ zeroes is precisely the number of $E_{n}$ vectors with $l-1$. For all other values, there are $0$ vectors satisfying the relations. Tracing these recurrence relations shows that the number of $E_{n}$ vectors with $i$ zeroes is precisely $x^{n}_{i}$ for $\tau(k) = 2k$. Therefore, counting these vectors reduces to taking the row sum of the triangle constructed by this $\tau$. Fix the $\{\{x^{n}_{i}\}_{i \geq 0} \}_{n \geq 0}$ as those generated this way. 
    
\bigskip

Notice that that row sums are strictly increasing, so it suffices to consider only the odd-number rows to generate an upper bound. From the triangle above, this amounts to considering the rows

\begin{align*}
    n=1 \,\,\,\, &1 \,\,\,\, \\ 
    n=3 \,\,\,\,&1 \,\,\,\, 2 \\ 
    n = 5 \,\,\,\,&1 \,\,\,\, 4 \,\,\,\, 5 \\ 
    n=7 \,\,\,\,&1 \,\,\,\, 6 \,\,\,\, 14 \,\,\,\, 14 \\
    n=9 \,\,\,\,&1 \,\,\,\, 8 \,\,\,\, 27 \,\,\,\, 48 \,\,\,\, 42 \\ 
&\vdots
\end{align*}
Consider now this triangle within its own right. Define, now, a sequence of sequences for this triangle $\{\{y^{n}_{i}\}_{i \geq 0} \}_{n \geq 0}$ where $y^{n}_{i} = x^{2n+1}_{i}$. 
Considering the recurrence relation row-wise,  $y^{n}_{k} = y^{n-2}_{k-2} + 2y^{n-2}_{k-1} + y^{n-2}_{k}$. It is shown in \cite{SHAPIRO197683} that for the largest index $k$ so that $y^{n}_{k} \neq 0$, $y^{n}_{k}$ is precisely the $n^{th}$ Catalan number, $\frac{1}{n+1} \binom{2n}{n}$.

\bigskip
Then, induction may be applied to the triangle on $y^{n}_{i}$ to see that the row sum of the triangle on $x^{n}_{i}$ at level $2n+1$ is 
\begin{equation}
\label{eq:num}
    \sum_{i} x^{2n+1}_{i} = \sum_{i} y^{n}_{i} = 2^{n} \prod_{k=0}^{n} \frac{2k+1}{k+1}
\end{equation}
This is easy to see in the case $n=0$ or $n=1$. Let it be shown for values up to $n-1$ and consider row $n$. Then, also note that 
\begin{equation}
    \sum_{i} y^{n}_{i} = \sum_{i} y^{n-1}_{i-2} + 2y^{n-1}_{i-1} + y^{n-1}_{i} = 4 (\sum_{i}y^{n-1}_{i}) - y^{n-1}_{\frac{n-1}{2}} = 4 (\sum_{i}y^{n-1}_{i}) - y^{n-1}_{k}
\end{equation}
where $\frac{n-1}{2} = k$ is the largest index $i$ so $y^{n-1}_{i} = x^{2n-1}_{i}$ is nonzero. Now, we may apply the inductive assumption and the result from \cite{SHAPIRO197683} to note that this sum is 
\begin{equation}
    4(2^{n-1} \prod_{k=0}^{n-1} \frac{2k+1}{k+1}) - \frac{1}{n+1} \binom{2n}{n}
\end{equation}
With some algebraic manipulation, 
\begin{align}
    \frac{1}{n+1} \binom{2n}{n}  = \frac{1}{n+1} \left( \frac{(2n)!}{n!n!} \right) =& \frac{1}{n+1} \left( (\frac{2}{1} \times \frac{4}{2} \times ... \times \frac{2n}{n}) ( \frac{1}{1} \times \frac{3}{2} \times ... \times \frac{2n-1}{n}) \right)  \\
    =& \frac{2}{n+1}\left(2^{n-1} \prod_{k=0}^{n-1} \frac{2k+1}{k+1}\right)
\end{align} 
Therefore, the $n^{th}$ row sum is $$(2 - \frac{1}{n+1})(2^{n} \prod_{k=0}^{n-1} \frac{2k+1}{k+1})  = 2^{n} \prod_{k=0}^{n} \frac{2k+1}{k+1}$$ which corresponds to the $2n+1^{st}$ row sum  $\sum_{i} x^{2n+1}_{i}$ as desired.

\bigskip
Now, consider an arbitrary $k$ row sum of the $x^{n}_{i}$ to bound above. If $k$ is odd, then we have calculated the row sum for $k$ by considering $m = \frac{k-1}{2} = \lfloor \frac{k}{2} \rfloor$ and applying it to the above formula to find the sum of the $y^{m}_{i}$. If $k$ is even, then $k+1$ is odd and $x^{k+1}_{i} \geq x^{k}_{i}$ for all $i$, so considering $m = \frac{(k+1)-1}{2} = \frac{k}{2}$ and using the above formula then bounds the row sum above. 

\bigskip
Finally, applying periodicity, the density of $L_{k}^{C}$ is then at-most \begin{equation}\frac{2^{m}}{2^{k}}\prod_{n=0}^{m} \frac{2n+1}{n+1}\end{equation} where $m =  \lfloor \frac{k}{2}\rfloor $.
    
\end{proof}

\begin{coro}
    The density of $L$ is $1$.
\end{coro}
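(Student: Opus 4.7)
The plan is to derive the corollary directly from the preceding theorem. Since $L^{C} \subseteq L_{k}^{C}$ (up to the finite set of points periodic for $T$ within $k$ iterations, which has density $0$), the upper density of $L^{C}$ is bounded above by the theorem's bound
\begin{equation}
\frac{2^{m}}{2^{k}} \prod_{n=0}^{m} \frac{2n+1}{n+1}, \qquad m = \lfloor k/2 \rfloor,
\end{equation}
for every $k \in \N$, and the whole corollary reduces to showing this quantity vanishes as $k \to \infty$.

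To establish the decay, I would first simplify the product into a familiar shape. Using the identity $1 \cdot 3 \cdot 5 \cdots (2m+1) = (2m+1)!/(2^{m} m!)$, one obtains
\begin{equation}
\prod_{n=0}^{m} \frac{2n+1}{n+1} = \frac{(2m+1)!}{2^{m} m! (m+1)!} = \frac{1}{2^{m}} \binom{2m+1}{m}.
\end{equation}
Substituting back, the bound becomes $\binom{2m+1}{m}/2^{2m+1}$ when $k = 2m+1$ and $\binom{2m+1}{m}/2^{2m}$ when $k = 2m$. By the classical Stirling asymptotic $\binom{2n}{n} \sim 4^{n}/\sqrt{\pi n}$, both expressions are $\Theta(1/\sqrt{m})$, hence tend to zero.

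Putting these together forces the upper density, and therefore the density, of $L^{C}$ to be $0$, so $L$ has natural density $1$. I do not anticipate any real obstacle once the theorem is available: the only substantive step is recognizing that the product collapses into a central binomial coefficient, after which the standard Stirling estimate finishes the argument.
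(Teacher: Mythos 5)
Your proof is correct, and the overall skeleton (bound the upper density of $L^{C}$ by the theorem's quantity for every $k$, then show that quantity tends to $0$) is the same as the paper's. The way you kill the limit, however, is genuinely different. The paper rewrites the bound as $2\prod_{n=0}^{m}\frac{2n+1}{2n+2}$, applies $\ln(1-x)\leq -x$, and invokes the divergence of the harmonic series, which shows the limit is $0$ without extracting an explicit rate. You instead collapse the product into $\frac{1}{2^{m}}\binom{2m+1}{m}$, so the whole bound becomes $\binom{2m+1}{m}/2^{k}$, and Stirling gives the explicit decay $\Theta(1/\sqrt{k})$. Your route is slightly more work up front (the factorial identity) but buys a sharper, quantitative statement, which is in the spirit of the paper's stated goal of finding a ``strict rate of convergence''; the paper's route is softer but self-contained at the level of elementary inequalities. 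One small point in your favor: you flag that $L$ is defined with strict inequality $T^{k}(y)<y$ while $L_{k}$ uses $T^{m}(y)\leq y$, so $L^{C}\subseteq L_{k}^{C}$ only up to points satisfying $T^{m}(y)=y$ for some $m\leq k$; since Terras' periodicity forces at most $2^{m}$ such points for each $m$, this exceptional set is finite and harmless. The paper asserts the inclusion without comment, so your version is the more careful one.
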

\begin{proof}
    Since $L^{C} \subset L_{k}^{C}$ for all $k$, we have that the density of $L^{C}$ is at most 
    \begin{equation}
    \displaystyle{\lim_{k \to \infty}} \frac{2^{m}}{2^{k}}\prod_{n=0}^{m} \frac{2n+1}{n+1} = \displaystyle{\lim_{k \to \infty}} \frac{2^{m}}{2^{k-m-1}}\prod_{n=0}^{m} \frac{2n+1}{2n+2} \leq \displaystyle{\lim_{m \to \infty}} 2(\prod_{n=0}^{m} \frac{2n+1}{2n+2})
    \end{equation}
    for $m$ a function of $k$ as above. Note then that the rightmost limit is then 
    \begin{equation}
        \exp(\displaystyle{\lim_{m \to \infty}} \ln(2) + \sum_{n=0}^{m} \ln(1-\frac{1}{2n+2})
    \end{equation}
    for $x < 1$, we have that $ln(1-x) \leq -x$ since $f(x) = \ln(1-x) + x$ has $f(0) = 0$ and $f'(x) = \frac{-x}{1-x} \leq 0$. Thus, since the exponential is an increasing function, this limit is at most 
    \begin{equation}
        \exp(\displaystyle{\lim_{m \to \infty}} \ln(2) + \sum_{n=0}^{m} -\frac{1}{2n+2})
    \end{equation}
    The series is harmonic and thus diverges to negative infinity, so that the total limit is $0$.
\end{proof}

\subsection{Bounding the Number of Failures within the Triangle}

Recall from \cite{SyracuseNonsing} the conjecture on the inverse image triangle of a $\N_2$ node. Precisely, it is conjectured that given any $k,n \in \N$, $a \in T^{-k}(3^{k}h-1)$, $a \neq 2^{k}h-1$, then there exists some $m$ so $1 \leq m \leq k$ such that $T^{k}(a) < a$. Since we may consider $3^{k-1}(3h)-1$ and further reductions, this is to say that this holds for all nodes in the preimages $\bigcup_{i=1}^{k}T^{-i}(3^{k}h-1)$ not of the form $2^{a}3^{k-a}h-1$, i.e. the leftmost branch as distinguished in that paper as well. 

\medskip

This result would locate the values not in $L$ as defined in section $3$, by generating a set in which, if values in $L^{c}$ exist, they must be located. The main theorem of the prior section introduces an upper bound for the number of such exception cases.

\begin{coro}
For a fixed $k,h \in \N$, the number of $a \in T^{-k}(3^{k}h-1)$ such that for all $m$ so $1 \leq m \leq k$ has $T^{m}(a) > a$ is at most 
\begin{equation}
2^{m}\prod_{n=0}^{m} \frac{2n+1}{n+1}
\end{equation} where $m =  \lfloor \frac{k}{2}\rfloor $.
\end{coro}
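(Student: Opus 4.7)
The plan is to read this corollary as an absolute (rather than density-level) version of the theorem above, by replacing the periodicity argument with a direct injectivity observation on preimages. The preceding theorem already produced the exact combinatorial count $2^m \prod_{n=0}^m \frac{2n+1}{n+1}$ for the number of parity patterns $v \in \{0,1\}^k$ whose partial sums satisfy $S_n(v) \geq n/2$ for every $1 \leq n \leq k$. All that remains is to show that each candidate $a \in T^{-k}(3^k h - 1)$ with the stated property gives rise to such a pattern, and that distinct candidates give distinct patterns.

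First, I would invoke the opening inequality of the theorem's proof: the hypothesis $T^m(a) > a$ for all $1 \leq m \leq k$ forces
\begin{equation*}
\left(\tfrac{3a+1}{2a}\right)^{S_n(a)} \left(\tfrac{1}{2}\right)^{n-S_n(a)} \;\geq\; \frac{T^n(a)}{a} \;\geq\; 1
\end{equation*}
for every $1 \leq n \leq k$, which upon taking logarithms and using $a \geq 1$ yields $S_n(E_k(a)) \geq n/2$ throughout. Thus each admissible $a$ furnishes a valid parity vector.

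Second, I would establish that the assignment $a \mapsto E_k(a)$ is injective on $T^{-k}(3^k h - 1)$. Fixing the terminal value $y = 3^k h - 1$ and a parity vector $v \in \{0,1\}^k$, the candidate preimage is recovered uniquely by reversing $T$ one step at a time: at each stage the prescribed parity bit forces the choice between the two inverse branches $y_j \mapsto 2 y_j$ and $y_j \mapsto (2 y_j + 1)/3$ (the latter possible only when the divisibility condition is met, i.e. $y_j \in \N_1$), so at most one $a$ is consistent with any given $v$. Combining with the first step bounds the cardinality in question by the count of valid parity vectors, namely $2^m \prod_{n=0}^m \frac{2n+1}{n+1}$, completing the argument.

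No serious obstacle arises; the only non-bookkeeping step is the injectivity check, which is essentially a restatement of the determinism of backward Collatz iteration conditioned on a parity sequence. The conceptual content of the corollary is that the density bound of the theorem, when localized to a single preimage fiber, becomes an absolute count because the normalization factor $2^k$ in the theorem was exactly the number of residue classes mod $2^k$, which in turn is exactly the number of $E_k$ patterns in bijection with preimages of a fixed target.
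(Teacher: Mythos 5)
Your proposal is correct and follows essentially the same route as the paper: bound the fiber $T^{-k}(3^{k}h-1)$ by the number of admissible $E_{k}$ vectors (those with $S_n \geq n/2$ for all $n$), which the preceding theorem already counted as $2^{m}\prod_{n=0}^{m}\frac{2n+1}{n+1}$. The only difference is that you make explicit the injectivity of $a \mapsto E_{k}(a)$ on a fixed preimage fiber (via deterministic backward iteration along a parity sequence), a step the paper's two-line proof leaves implicit under its appeal to periodicity.
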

\begin{proof}
    The number of nodes $a$ in the triangle generated by $3^{k}h-1$ such that $T^{m}(a) > a$ for $1 \leq m \leq k$ is at most the number of $E_{k}(y)$ vectors corresponding to $1 \leq y \leq 2^{k}$ so $T^{m}(y) > y$ for $1 \leq m \leq k$. Thus, this is precisely equation \ref{eq:num}.
\end{proof}

We may also simplify the number of cases to calculate drastically by examining the structure of the triangle and how it varies across different values of $h$.

\begin{prop}
The structure of the triangle generated by $3^{k}h-1$ is invariant with respect to $h$. That is to say, for each $a \in T^{-l}(3^{k}h-1)$, $1 \leq l \leq k$, and for any $h_1 \in \N$, there exists $a_1 \in T^{-l}(3^{k}h_1 - 1)$ such that $E_{l}(a) = E_{l}(a_1)$. 
\end{prop}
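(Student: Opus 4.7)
The plan is to parametrize each $a \in T^{-l}(3^{k}h-1)$ by a ``descent sequence'' recording the choice of preimage branch at each of the $l$ backward steps, and to show that both the validity of such a sequence and the parity vector it induces depend only on the residue of the root modulo $3^{l}$. Since $3^{k}h - 1 \equiv -1 \pmod{3^{l}}$ for every $l \leq k$, independently of $h$, this will yield the desired invariance.

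Explicitly, set $f(n) = 2n$ and $g(n) = (2n-1)/3$, so that $T^{-1}(n) = \{f(n)\}$ for $n \notin \N_{2}$ and $T^{-1}(n) = \{f(n), g(n)\}$ for $n \in \N_{2}$. Each $a \in T^{-l}(r)$ then corresponds to a unique sequence $\omega = (\omega_{1},\ldots,\omega_{l}) \in \{f,g\}^{l}$ with $a = \omega_{l} \circ \cdots \circ \omega_{1}(r)$; call $\omega$ \emph{valid at} $r$ if each $g$ in $\omega$ is applied to a node lying in $\N_{2}$. Because $f$-images are even and $g$-images are odd, the parity of $\omega_{j} \circ \cdots \circ \omega_{1}(r)$ is precisely $\mathbbm{1}_{\{\omega_{j} = g\}}$, whence $E_{l}(a)_{j} = \mathbbm{1}_{\{\omega_{l-j+1} = g\}}$ is determined by $\omega$ alone.

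The remaining step is to argue that validity depends on $r$ only through $r \bmod 3^{l}$. Both $f$ and $g$ are affine over $\mathbb{Q}$. Writing $b_{i} := \omega_{i} \circ \cdots \circ \omega_{1}(r)$ and letting $\alpha_{i}$ denote the number of $g$'s among $\omega_{1}, \ldots, \omega_{i}$, a one-step induction yields
\begin{equation}
    3^{\alpha_{i}} b_{i} = 2^{i} r + C_{i}(\omega),
\end{equation}
where $C_{i}(\omega) \in \Z$ depends only on $\omega$, not on $r$. Reducing this identity modulo $3^{\alpha_{i}+1}$ and dividing by $3^{\alpha_{i}}$ shows that $b_{i} \bmod 3$ is fixed by $\omega$ together with $r \bmod 3^{\alpha_{i}+1}$. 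Validity of $\omega$ is checked step by step via the residues $b_{0} \bmod 3, \ldots, b_{l-1} \bmod 3$, and since $\alpha_{i} \leq i \leq l-1$, only $r \bmod 3^{l}$ is ever invoked.

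For any $h, h_{1} \in \N$ one has $3^{k}h - 1 \equiv -1 \equiv 3^{k}h_{1} - 1 \pmod{3^{l}}$ whenever $l \leq k$, so the sets of valid descent sequences of length $l$ coincide at the two roots. Given $a \in T^{-l}(3^{k}h-1)$ realized by a descent $\omega$, the node $a_{1} \in T^{-l}(3^{k}h_{1}-1)$ produced by the same $\omega$ satisfies $E_{l}(a_{1}) = E_{l}(a)$, as both vectors are determined by $\omega$. The main technical point is the affine identity displayed above; the rest of the argument is a direct modular-arithmetic observation.
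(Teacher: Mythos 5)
Your proof is correct, but it takes a genuinely different route from the paper's. The paper works forward in $h$: it shows by induction on $l$ that every $a \in T^{-l}(3^{k}h-1)$ has the form $a = 2^{l}3^{k-l}h\alpha + \beta$ with $\alpha,\beta$ independent of $h$, defines $a_1 = 2^{l}3^{k-l}h_1\alpha + \beta$, and then invokes Terras' periodicity (Proposition \ref{prop:period}) --- since $a \equiv a_1 \pmod{2^{l}}$ --- to conclude $E_{l}(a) = E_{l}(a_1)$ and hence, by running $T$ forward $l$ times, that $a_1 \in T^{-l}(3^{k}h_1-1)$. You instead fix the descent sequence $\omega$, read the parity vector directly off $\omega$ (so no appeal to periodicity mod $2^{l}$ is needed), and show that the set of valid descents depends on the root only through its residue mod $3^{l}$, which is $-1$ for every $h$ once $l \leq k$. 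The two arguments rest on the same affine identity: your $3^{\alpha_{l}}b_{l} = 2^{l}r + C_{l}(\omega)$ specializes at $r = 3^{k}h-1$ to the paper's $a = 2^{l}3^{k-l}h\alpha + \beta$ with $\alpha = 3^{l-\alpha_{l}}$. What your version buys is that it makes explicit a point the paper leaves implicit, namely that the branching structure itself (which intermediate nodes admit the odd preimage $(2n-1)/3$) is identical at the two roots, and it isolates exactly where the hypothesis $l \leq k$ enters ($3^{l}$ must divide $3^{k}$). What the paper's version buys is brevity: once one knows $2^{l} \mid (a - a_1)$, Proposition \ref{prop:period} delivers the equality of parity vectors in one stroke.
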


\begin{proof}
Fix $k,h \in \N$ and consider $a \in T^{-l}(3^{k}h-1)$ for $1 \leq l \leq k$. We wish to show that there exist $\alpha,\beta$ not dependent on $h$ so that $a = 2^{l}3^{k-l}h\alpha + \beta$. This decomposes $a$ into the part maintaining the initial power of $3$ ($2^{l}3^{k-l}\alpha$) and the ``remainder" part that helps locate it on the branch $\beta$.  Indeed, consider that the preimage of $3^{k}h-1$ is $\{2(3^{k-1})h-1, 2(3^{k}h) - 2\}$. Working inductively, if we assume there are $\alpha_0$ and $\beta_0$ not dependent on $h$ so $T(a) = 2^{l-1}3^{k-l+1}h\alpha_0 + \beta_0$, then $a \in \{2^{l}3^{k-l}h\alpha_0 + \frac{2\beta_0-1}{3}, 2^{l}3^{k-l}h(3\alpha_0) + 2\beta_0\}$ where, when the first option is possible, $\frac{2\beta_0-1}{3}$ is an integer. Therefore, we may express $a$ in the desired form as well. 

\bigskip

Now, consider $E_{l}(a)$. Denote, for arbitrary $h_1 \in \N$, $a_1 = 2^{l}3^{k-l}h_1\alpha + \beta$. By Terras' periodicity result, $E_{l}(a_1) = E_{l}(a)$, and further this implies by repeated applications of the Collatz map that $T^{l}(a_1) = 3^{k}h_1 -1$. 
    
\end{proof}

\begin{prop}
    If there exists an $a \in T^{-k}(3^{k}h-1)$ for which all $1 \leq m \leq k$ have $T^{m}(a) > a$, then the corresponding $a_1 \in T^{-k}(3^{k}-1)$ has the same property. 
\end{prop}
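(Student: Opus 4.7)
The plan is to leverage the representation from the previous proposition: taking $h_1 = 1$, we obtain $a_1 = 2^k \alpha + \beta$ and $a = 2^k h \alpha + \beta$ with the same pair $(\alpha, \beta)$, where the inductive construction gives $\alpha$ as a power of $3$ (in particular $\alpha \geq 1$). Since $h \geq 1$, this forces $a_1 \leq a$. Moreover $a \equiv a_1 \pmod{2^k}$, so Proposition \ref{prop:period} gives $E_k(a_1) = E_k(a)$, and in particular $S_m(a_1) = S_m(a) =: s_m$ for every $1 \leq m \leq k$.

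Next I would invoke the standard exact-iteration formula for $T$, derived inductively from the one-step recursion: whenever $E_m(y)$ is fixed, there is a nonnegative integer $\kappa_m$ depending only on this vector such that
\[
T^m(y) \;=\; \frac{3^{s_m} y + \kappa_m}{2^m}.
\]
Rearranging, $T^m(y) > y$ is equivalent to $(2^m - 3^{s_m})\, y < \kappa_m$. Because $s_m$ and $\kappa_m$ coincide for $a$ and $a_1$, this is exactly the inequality I need to transfer from $a$ to $a_1$.

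The transfer is a short case split on the sign of $2^m - 3^{s_m}$. If $3^{s_m} \geq 2^m$, then $(2^m - 3^{s_m})\, a_1 \leq 0 < \kappa_m$ (the degenerate $s_m = 0$ case is excluded because it would already contradict $T^m(a) > a$), so $T^m(a_1) > a_1$ holds for free. If instead $3^{s_m} < 2^m$, then $T^m(a) > a$ gives $a < \kappa_m/(2^m - 3^{s_m})$, and $a_1 \leq a$ yields the same bound for $a_1$, so again $T^m(a_1) > a_1$. Applying this at each $m$ closes the argument.

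The step I expect to be most delicate is the derivation and correct signing of the formula for $\kappa_m$; this requires an inductive bookkeeping showing that $\kappa_m$ is a sum of terms of the shape $3^{s_m - s_{i+1}} 2^{i}$ (hence nonnegative), consistent with the paper's convention for $T$ on odd integers. Once that is in place, the remainder of the proof is the conceptual monotonicity observation that $\{y > 0 : T^m(y) > y\}$, restricted to a single residue class modulo $2^k$, is downward-closed in $y$, so passing from $a$ to the smaller $a_1$ preserves the property at every level $m$.
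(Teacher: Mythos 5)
Your argument is sound and rests on the same underlying mechanism as the paper's proof: both reduce to the facts that $E_k(a_1)=E_k(a)$ (via the decomposition $a=2^k h\alpha+\beta$, $a_1=2^k\alpha+\beta$, and periodicity) and that $a_1\le a$ since $\alpha\ge 1$ and $h\ge 1$, and then transfer the property $T^m(y)>y$ from $a$ down to the smaller value. The execution differs: the paper telescopes the one-step ratios $\frac{T^l(y)}{T^{l-1}(y)}$ and compares them term by term using monotonicity of $x\mapsto \frac{3x\pm 1}{2x}$, whereas you use the closed-form $m$-step affine formula $T^m(y)=\frac{3^{s_m}y+\kappa_m}{2^m}$ and reduce each level to a single linear inequality. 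Your version is cleaner in that it isolates exactly where monotonicity enters and avoids comparing the intermediate iterates $T^{l-1}(a_1)$ and $T^{l-1}(a)$ separately. One caution on the step you yourself flagged as delicate: the claim $\kappa_m\ge 0$ is correct for the map $T(x)=\frac{3x+1}{2}$ on odd $x$, which is the convention the triangle structure in this section actually requires (e.g.\ $T(2\cdot 3^{k-1}h-1)=3^k h-1$ holds only under the $3x+1$ rule); under the $\frac{3x-1}{2}$ rule displayed in the paper's definition of $T$, the additive constant in the iteration formula is nonpositive, the set $\{y : T^m(y)>y\}$ within a fixed residue class modulo $2^m$ becomes \emph{upward}-closed rather than downward-closed, and both your transfer and the paper's ratio comparison would run in the wrong direction. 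So your proof is correct for the intended map, but the phrase ``consistent with the paper's convention'' should be replaced by an explicit derivation of the sign from the recursion $\kappa_{m+1}=3\kappa_m+2^m$ for odd steps and $\kappa_{m+1}=\kappa_m$ for even steps.
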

\begin{proof}
Since $E_{k}(a) = E_{k}(a_1)$, we have that the way $T$ acts on both values is the same within the triangle. Then, we need only compare the ratio $\frac{T^{m}(a_1)}{a_1}$. Note that this is precisely 
\begin{equation} 
\frac{T^{m}(a_1)}{T^{m-1}(a_1)} \frac{T^{m-1}(a_1)}{T^{m-2}(a_1)}  \,\,...\,\, \frac{T(a_1)}{a_1}
\end{equation}

Now, we need only show $\frac{T^{l}(a_1)}{T^{l-1}(a_1)} \geq \frac{T^{l}(a)}{T^{l-1}(a)}$ for each $1 \leq l \leq m$. Without loss of generality, we examine $l=1$. Let $a = 2^{k}h\alpha + \beta$ so $a_1 = 2^{k}\alpha + \beta$. We then have that $T(a_1)$ is either $\frac{2^{k}\alpha + \beta}{2}$ or $\frac{3(2^{k}\alpha + \beta)-1}{2}$. For the first case, \begin{equation}
    \frac{T(a)}{a} = \frac{2^{k}h\alpha + \beta}{2(2^{k}h\alpha + \beta)} = \frac{2^{k}\alpha + \beta}{2(2^{k}\alpha + \beta)} = \frac{T(a_1)}{a_1}
\end{equation}
For the second 
\begin{equation}
    \frac{T(a_1)}{a_1} = \frac{3(2^{k}\alpha + \beta)-1}{2(2^{k}\alpha + \beta)} \geq \frac{3(2^{k}h\alpha + \beta)-1}{2(2^{k}h\alpha + \beta)} \geq \frac{3}{2} - \frac{1}{4} >1
\end{equation} 
since $h \geq 1$.
    
\end{proof}

Proposition 11 reduces solving the triangle conjecture to proving that it holds in all cases for $h=1$, and so the structure to check is more strictly defined. Further, the invariance with respect to $h$ occurs beyond $h \in \N$, and considering an extension of the Collatz map to $T: \Z \to \Z$ allows consideration of $h=0$ or to looking at the triangle of $k$ levels generated by $-1$, which has the same structure as for $h=1$ well.It is not yet clear how this extension connects to the triangle conjecture, but is an interesting point nonetheless. 

\section{Measure Equivalences}
\label{sec:measequiv}

In \cite{Assanipre}, it was shown that bounding the Collatz Trajectories is equivalent to constructing a measure which is finite, power-bounded with respect to the Collatz map, and everywhere nonzero. The construction was extended further to general maps including at least a single cycle in \cite{SyracuseNonsing}. Within the case of the Collatz map, there arise difficulties in the construction of such a measure due to family-chain connections also discussed in \cite{SyracuseNonsing} which allow values of preimages of a given point to vary widely in a hard-to-predict way. In this section, we develop further the theory surrounding such applications of measures to bounding the trajectories of the Collatz Map by extending to the cases of weaker measures, and further use a tool from dynamical systems to also gain information on the length of cycles possible depending on current knowledge of the minimum bound of cycle elements for cycles other than $\{1,2\}$. 

\bigskip

\subsection{Limiting Measure Cases}

The principle desire in extending the measures is  to weaken the requirement on bounding the measure of given sets, thus extending the measures to take advantage of properties noticed in \cite{Assanipre}. The following two results show that we may trade these requirements for behaviors over time of the measure, perhaps allowing for leveraging long-term decreases or increases of the values of preimages. For this section, assume each measure is non-negative and defined on the $\sigma$-algebra $P(\N)$. The following establishes the idea behind this extension.

\bigskip

\begin{prop}
Let there exist a finite measure $\mu$ on $\N$ such that $\displaystyle{\lim_{n \to \infty}} \mu(T^{-n}(A))$ exists for all $A \subset \N$. Then, $D_2$ must have measure 0. 
\end{prop}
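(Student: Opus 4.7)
The plan is to introduce the limit set function $L(A) := \lim_{n\to\infty}\mu(T^{-n}(A))$, establish that $L(D_2) = \mu(D_2)$ and that $L(\{y\}) = 0$ for every $y\in D_2$, and then use countable additivity of $L$ to combine these two facts into the desired conclusion.

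First, note that $y$ is eventually periodic if and only if $T(y)$ is, so $T^{-1}(D_2) = D_2$, and by induction $T^{-n}(D_2) = D_2$ for every $n$. Consequently $\mu(T^{-n}(D_2)) = \mu(D_2)$ for every $n$, and passing to the limit gives $L(D_2) = \mu(D_2)$.

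Next, fix $y\in D_2$. Since $y$ is not eventually periodic, its forward orbit consists of pairwise distinct points. This forces the sets $T^{-n}(\{y\})$, $n\ge 0$, to be pairwise disjoint: any common element $z\in T^{-n}(\{y\})\cap T^{-m}(\{y\})$ with $n<m$ would give $T^{m-n}(y) = y$, placing $y$ on a cycle. Finiteness of $\mu$ then yields
\begin{equation*}
\sum_{n=0}^{\infty} \mu(T^{-n}(\{y\})) \;\le\; \mu(\N) \;<\; \infty,
\end{equation*}
so $\mu(T^{-n}(\{y\}))\to 0$, that is, $L(\{y\}) = 0$.

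For the final step I need $L$ to be countably additive so that it distributes over $D_2$. Each set function $\mu_n(A) := \mu(T^{-n}(A))$ is a finite countably additive measure on the $\sigma$-algebra $P(\N)$, and the hypothesis asserts that $\mu_n(A)$ converges for every $A\in P(\N)$. By the Nikodym convergence theorem, the setwise limit $L$ is itself a countably additive measure on $P(\N)$. Since $D_2\subset \N$ is countable,
\begin{equation*}
\mu(D_2) \;=\; L(D_2) \;=\; \sum_{y\in D_2} L(\{y\}) \;=\; 0,
\end{equation*}
which is the claim. The main obstacle I foresee is this last step: finite additivity of $L$ is automatic from the definition, but the leap to full countable additivity, which is what allows one to distribute $L$ over the countable set $D_2$, relies on the Nikodym/Vitali--Hahn--Saks machinery for setwise limits of measures rather than any elementary manipulation. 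An alternative route, bypassing Nikodym, would be to observe that $L$ is $T$-invariant (since $L(T^{-1}(A)) = \lim_n \mu(T^{-n-1}(A)) = L(A)$) and then try to rule out $L(D_2)>0$ via a recurrence-style argument on the infinite distinct orbits inside $D_2$; this seems workable but looks messier than the direct measure-theoretic finish.
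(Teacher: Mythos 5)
Your proof is correct, but it takes a genuinely different route from the paper's. The paper argues by contradiction from a single point $a\in D_2$ with $\mu(\{a\})>0$: it collapses the chain-tree of $a$ onto a chain $\{a_z\}_{z\in\Z}$, introduces the sets $\A_n$ (preimages of every $n$-th chain node) which are cyclically permuted by $T^{-1}$, and then exhibits an explicit sparse set $B=\{a_z : |z|=2^n\}$ along which $\mu(T^{-i}(B))$ has one subsequence bounded below by $\mu(\{a\})>0$ and another forced below $2^{-n}M$ for every $n$ --- so the hypothesized limit cannot exist. That argument is elementary and self-contained, only ever tests the convergence hypothesis on a few explicitly constructed sets (so it proves a formally stronger statement), and its chain-collapsing template is reused for the harder Ces\`aro (asymptotically mean stationary) version later in the section. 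You instead show $L(\{y\})=0$ for each $y\in D_2$ via disjointness of the preimages $T^{-n}(\{y\})$ and finiteness of $\mu$, show $L(D_2)=\mu(D_2)$ from $T^{-1}(D_2)=D_2$, and close the gap with the Nikodym convergence theorem to get countable additivity of the setwise limit $L$. This is shorter and makes transparent exactly where the hypothesis (convergence on \emph{all} of $P(\N)$) is consumed; the cost is the reliance on the Nikodym/Vitali--Hahn--Saks machinery, though the paper itself invokes Vitali--Hahn--Saks in a later proposition, so this is hardly foreign to the setting. You are also right to flag the countable-additivity step as the crux: finite additivity of $L$ would not suffice, since one cannot exhaust $\mu(D_2)$ by finite subsets without uniform control of $\mu(T^{-n}(\cdot))$ on the tails --- which is precisely what Nikodym supplies. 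Note that the paper sidesteps this entirely because it proves $\mu(\{a\})=0$ (not merely $L(\{a\})=0$) for each $a\in D_2$ and then needs only countable additivity of $\mu$ itself.
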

\begin{proof}

By contradiction, let $a \in D_2$ have $\mu(a) > 0$. Let $E = \bigcup_{i=0}^{\infty} \bigcup_{j=0}^{\infty} T^{-j}(T^{i}(a))$ be a chain-tree in $D_2$. We may then pick a chain $H = \{a_{z}\}_{z \in \Z}$ in $E$ to be a set such that $T(a_{z}) = a_{z+1}$ and $a_0 = a$. We focus on the measure on $H$. First, we construct a set $\A_n$. Define $A_n = \{a_{z}\}_{z \in n\Z}$ to be every n-th node in the chain. We pick  $\A_n = \bigcup_{i=1}^{\infty} T^{-in}(A_n)$.  For each $\A_n$, the $n$ different sets $\A_n, T^{-1}(\A_n), T^{-2}(\A_n), ... T^{-(n-1)}(\A_n)$ are disjoint, and $T^{-n}(\A_n) = \A_n$. These sets repeat as we take preimages. Therefore, if any two of these sets have different measures under $\mu$, say $\A_n$ and $T^{-m}(\A_n)$, then the sub-sequences of $T^{-k}(\A_n)$ corresponding to these generated by the $n^{th}$ and $n+m^{th}$ indices converge to different values and the proof is complete.

\bigskip

Next, assume that $\mu(\A_n) = \mu( T^{-1}(\A_n)) =  ... =\mu(T^{-(n-1)}(\A_n))$ for all $n$. Let $\mu(E) = M$. Since $\A_n \cup T^{-1}(\A_n) \cup ... \cup T^{-(n-1)}(\A_n) = E$, we then have that $\mu(\A_{n}) = \frac{M}{n}$. Consider the set $B = \{a_z|  \,\, |z| = 2^{n} \, \text{for} \, n\in \mathbb{N}\}$. Then, there exists a subsequence of $\{T^{-i}(B)\}_{i \in \N}$ given by $\{T^{-i_{k}}(B)\}$ such that $T^{-i_k}(B)$ contains $a_0$ for each $i_{k}$. This shows that there exists a subsequence of the $\{T^{-i}(B)\}_{i \in \N}$ where the limit of the measures of the subsequence is positive.

\bigskip

However, consider that for any $n$,  $B \subset \{a_{z} \, | \, z \in  2^{n}\mathbb{Z}\} \cup \{a_{-2^{n-1}}, a_{-2^{n-2}}, ... ,a_{2^{n-1}}\}$. Note that since $\mu$ is a finite measure and $E$ is an infinite subset of $D_2$, for any finite $S \subset E$, the preimages of $S$ are disjoint and $\lim_{m \to \infty} \mu(T^{-m}(S)) = 0$. Take $ S=\{a_{-2^{n-1}}, a_{-2^{n-2}}, ... ,a_{2^{n-1}}\}$.  Further, it is assumed that $\mu(\A_{2^{n}}) = \mu(T^{-1}(\A_{2^{n}})) = ... = \mu(T^{-(2^{n}-1)}(\A_{2^{n}}) = 2^{-n}M$, and so because $T^{-k}( \{a_{z} \, | \, z \in  2^{n}\mathbb{Z}\})$ is a subset of one of these $2^{n}$ sets, 
\begin{equation}
    \displaystyle{\lim_{k \to \infty}} \mu(T^{-k}(B)) \leq 2^{-n}M
\end{equation}
Since we picked $n$ arbitrarily, the limit then must be $0$. The two subsequences converge to different values, giving a contradiction.
\end{proof}

\bigskip

The central part of the above argument is collapsing the structure of the chain-tree in such a way that its structure mirrors the integers. We may instead expand this to collapse a single level of the chain-tree to a set $B_{z}$ corresponding to the element of the chain $a_z$. This more directly reduces considerations of the chain-tree to considerations of the chain (or the integers), where the Collatz map acts as a right shift. This argument allows us to extend the result further to the more-desirable Cesaro limits as opposed to standard limits, which presents them in a form more common to Ergodic Theory.

\bigskip

\begin{defn}
    Let $(\Omega, \mathcal{F}, \nu)$ be a probability measure space and $V: \Omega \to \Omega$ a $\mathcal{F}$-measurable map. Then, we say $\nu$ is asymptotically mean stationary with respect to $V$ if $\displaystyle{\lim_{N \to \infty}} \frac{1}{N} \sum_{n=1}^{N} \nu(V^{-n}(A))$ exists for all $A \in \mathcal{F}$. 
\end{defn}

\begin{prop}

Let there exists a finite measure $\mu$ on $\N$ such that $\displaystyle{\lim_{k \to \infty}} \,\, \frac{1}{k} \sum_{i=1}^{k} \mu(T^{-i}(A))$ exists for all $A \subset \N$, that is to say that $(\N, P(\N), \mu, T)$ is asymptotically mean stationary. Then, $D_2$ must have measure 0. 

\end{prop}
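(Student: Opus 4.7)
The plan is to bypass the level-by-level chain-tree argument of the previous proposition and instead extract from the AMS hypothesis a genuine countably additive $T$-invariant measure on $P(\N)$ that must vanish on $D_2$. The crucial ingredient is the Vitali-Hahn-Saks (Nikodym) theorem, which upgrades pointwise setwise convergence of uniformly bounded measures to countable additivity of the limit, after which the conclusion drops out of a short bookkeeping argument on disjoint preimages.

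First I would introduce $\bar{\mu}_N := \tfrac{1}{N}\sum_{n=1}^{N} \mu \circ T^{-n}$, which is a finite positive measure on $(\N, P(\N))$ of total mass $\mu(\N)$, and rewrite the hypothesis as the statement that $\bar{\mu}(A) := \lim_{N \to \infty} \bar{\mu}_N(A)$ exists for every $A \subset \N$. Because the $\bar{\mu}_N$ are uniformly bounded countably additive measures on the $\sigma$-algebra $P(\N)$, the Vitali-Hahn-Saks theorem guarantees that the pointwise limit $\bar{\mu}$ is itself countably additive, and hence defines a finite positive measure on $P(\N)$.

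Next I would verify the singleton estimate $\bar{\mu}(\{x\}) = 0$ for every $x \in D_2$. Since $x$ lies on no cycle, the preimages $T^{-n}(\{x\})$, $n \geq 0$, are pairwise disjoint, so $\sum_{n} \mu(T^{-n}(\{x\})) \leq \mu(\N) < \infty$ and therefore $\mu(T^{-n}(\{x\})) \to 0$; Cesaro averaging yields $\bar{\mu}(\{x\}) = 0$. On the other hand $D_2$ is totally $T$-invariant, $T^{-1}(D_2) = D_2$, since the forward orbit of $x$ meets a cycle if and only if the forward orbit of $T(x)$ does (the two orbits differ only by whether $x$ itself is included, and $x$ is in a cycle iff $T(x)$ is). Hence $\mu(T^{-n}(D_2)) = \mu(D_2)$ for every $n$, and consequently $\bar{\mu}(D_2) = \mu(D_2)$.

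To finish, countability of $D_2 \subset \N$ together with the countable additivity of $\bar{\mu}$ give $\bar{\mu}(D_2) = \sum_{x \in D_2} \bar{\mu}(\{x\}) = 0$, and combining with $\bar{\mu}(D_2) = \mu(D_2)$ forces $\mu(D_2) = 0$. The main obstacle is precisely the passage from pointwise Cesaro convergence to countable additivity of the limit on the full power set $P(\N)$; once Vitali-Hahn-Saks supplies this step, the rest is a short argument using disjointness of preimages of points in $D_2$ and the total invariance of $D_2$ itself, with no need to invoke the chain-tree collapse used in the previous proposition.
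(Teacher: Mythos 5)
Your proof is correct, but it takes a genuinely different route from the paper. The paper argues by contradiction from a point $a \in D_2$ with $\mu(\{a\}) > 0$: it collapses the chain-tree of $a$ into levels $B_z$, builds an explicit set $B$ out of blocks of levels whose running inclusion density oscillates between $\tfrac{1}{2}$ and $\tfrac{1}{3}$, and shows the Ces\`aro averages $\tfrac{1}{M}\sum_{i=1}^{M}\mu(T^{-i}(B))$ then have two subsequential limits separated by a fixed gap, contradicting asymptotic mean stationarity. You instead extract the limit set function $\bar{\mu}(A) = \lim_N \tfrac{1}{N}\sum_{n=1}^{N}\mu(T^{-n}(A))$, invoke the Vitali--Hahn--Saks/Nikodym convergence theorem (a tool the paper itself uses later, in its proof that asymptotic mean stationarity is equivalent to a.e.\ convergence of Birkhoff averages) to get countable additivity of $\bar{\mu}$, and then finish with two clean observations: since no point of $D_2$ is periodic, the sets $T^{-n}(\{x\})$ are pairwise disjoint, so $\mu(T^{-n}(\{x\})) \to 0$ and $\bar{\mu}(\{x\}) = 0$ for each $x \in D_2$, whence $\bar{\mu}(D_2) = 0$ by countable additivity over the countable set $D_2$; and since $T^{-1}(D_2) = D_2$, one has $\bar{\mu}(D_2) = \mu(D_2)$. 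All the individual steps check out (the total invariance of $D_2$, the disjointness of preimages of aperiodic points, and the applicability of Nikodym's theorem to the uniformly bounded positive measures $\tfrac{1}{N}\sum_{n=1}^{N}\mu\circ T^{-n}$ on $P(\N)$). What your approach buys is brevity and generality: it needs no chain-tree or level structure, no nonsingularity, and it works verbatim for any self-map of a countable set whose non-returning set contains no periodic points; it is also closer in spirit to the paper's subsequent proposition, which restricts the limiting measure $\bar{\mu}$ to $D_2$ and uses wandering sets. What the paper's construction buys is an explicit witness: a concrete set $B$ on which the Ces\`aro averages demonstrably fail to converge, which makes the obstruction visible rather than funneling it through a soft convergence theorem.
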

\textbf{Remark}: Note that $\frac{1}{k}\sum_{i=1}^{k}\mu(T^{-i}(\A_n))$ converges to $\frac{1}{n}$ as $k \to \infty$. In other words, this is a much weaker requirement on the measure than the previous case. The limit does act similarly on finite sets. For any set $A$ such that $\sum_{i=1}^{\infty} \mu(T^{-i}(A)) = l < \infty$, $\frac{1}{k} \sum_{i=1}^{k} \mu(T^{-i}(A)) \leq \frac{l}{k}$ shows that the limit of these means is $0$. Since the above property holds for singletons, it holds for finite sets as well. The effort is then extending to the infinite case in the same way.

\begin{proof}
By contradiction, let $\mu$ be such a measure and $a \in D_2$ be a point so $\mu(a) > 0$. Let $H = \{a_{z} \}_{z \in Z}$ and $E$ be as in the proof of proposition 1, and assume $\mu(E) = 1$ by renormalization. 

\medskip

We begin by redefining a set similar to the $\A_{n}$ in concept. Let $B_{z} = \bigcup_{i=0}^{\infty} T^{-i}(T^{i}(a_{z}))$, so that the $B_z$ correspond to a ``level" of the chain-tree $E$ as demarcated by the $a_z$.

\medskip

First, we construct a set $B$. To begin, select $N$ such that $\sum_{i=N+1}^{\infty} \mu(B_i) + \mu(B_{-i}) < \frac{1}{20}$, so that also $\sum_{-N}^{N} \mu(B_i) \geq \frac{19}{20}$ (these values are mostly arbitrary choices, though the first must be sufficiently small for the following argument). Let $K = 2N+1$. Let $B = [B_{N+1} \cup B_{N+2}\cup ... \cup B_{3N+1}] \cup [B_{7N+4}\cup B_{7N+5}\cup ... \cup B_{11N+5}] \cup ... \cup [B_{17N+9}\cup B_{17N+10}\cup ... \cup B_{23N+11} ]  \cup ...$ To construct $B$ in words move forward to $B_{N}$, take the first $K$ levels $B_{N+1} ... B_{3N+1}$, leave the following $2K$ levels, take the next $2K$ levels $B_{7N+4}, ... , B_{11N+5}$, leave the following $3K$ levels, take the next $3K$ levels, leave the next $6k$ levels ad infinitum. This corresponds to each step following a given pattern. Considering levels as starting from level $B_{-N-1}$, we skip $K$. Then, at each step, we take enough nodes so that the total number of levels included divided by the number of levels since $-N-1$ is $\frac{1}{2}$, then exclude enough that this drops to $\frac{1}{3}$. This allows us to construct two sequences based on these choices which converge in different ways. Consider first the value $\frac{1}{2K}\sum_{i=1}^{2K} \mu(T^{-i}(B))$. Since $T^{-1}(B_i) = B_{i-1}$, and since we move $2K$, we have that the $K$ levels in $B$ given by $B_{N+1},...,B_{3N+1}$ each take the values of $B_{-N},...,B_{N}$ precisely once in the sum $\sum_{i=1}^{2K} \mu(T^{-i}(B))$. Therefore, 
\begin{equation}
    \sum_{i=1}^{2K} \mu(T^{-i}(B)) \geq \left( \frac{19}{20}\right) \cdot K
\end{equation}

Using similar equations, we may track preimages as they pass over the central mass of $\frac{19}{20}$ or stay within the tail mass of $\frac{1}{20}$ to construct two sequences with masses bounded as 

\begin{align}
    &\frac{1}{2K}\sum_{i=1}^{2K} \mu(T^{-i}(B)) \geq \frac{19}{20}\left(\frac{K}{2K}\right) = \frac{19}{40}  
    &\frac{1}{3K}\sum_{i=1}^{3K} \mu(T^{-i}(B)) \leq \frac{1}{20} + \frac{19}{20}\left(\frac{K}{3K}\right)  \\ 
    &\frac{1}{6K}\sum_{i=1}^{5K} \mu(T^{-i}(B)) \geq \frac{19}{20}\left(\frac{3K}{5K}\right) \geq \frac{19}{40}  
    & \frac{1}{9K}\sum_{i=1}^{9K} \mu(T^{-i}(B)) \leq \frac{1}{20} + \frac{19}{20}\left(\frac{3}{9}\right) \\ 
    &\frac{1}{12K}\sum_{i=1}^{12K} \mu(T^{-i}(B)) \geq \frac{19}{20}\left(\frac{6K}{12K}\right)=  \frac{19}{40}  
    &\frac{1}{18K}\sum_{i=1}^{85K} \mu(T^{-i}(B)) \leq \frac{1}{20} + \frac{19}{20}\left(\frac{6}{18}\right)    
\end{align}

For the pattern on the left, we consider indices such that half of the constructed sequence up to that point (considered from $-N-1$) is included in $B$. In $2k$, there are $k$ left out and $k$ included. Similarly, in $6K$, there are $k$ out, $k$ in, $2k$ out, $2k$ in, leaving $3k$ in and $3k$ out. The measures of these are always at least $\frac{19}{40}$ by the same computation as for the cases shown above.The pattern on the right corresponds to adding one more ``block" of left-out nodes, or step in the constructed sequence, to these. The construction of $B$ guarantees that the corresponding values are at most $\frac{1}{20} + \frac{19}{60}$. This constructs two subsequences which must have different limits and contradicts the assumption that the limit converges. 
\end{proof}

\textbf{Remark:} \begin{itemize}
    \item[i)]{ Constructing a measure with either the property in proposition 1 or proposition 2 which is only zero on points known to be in $C \cup D_1$ then this shows that $D_2$ is empty as well.}
    \item[ii)]{This measure has weaker requirements than that posed in \cite{Assanipre}}
    \item[iii)]{This proposition does not require that $T$ be nonsingular, as the following proposition does.}
\end{itemize}

A slight modification of this proposition allows for an argument based on Birkhoff's Ergodic Theorem which generalizes to the class of maps posed in \cite{SyracuseNonsing} as well as to more general measurable maps. The second version also relies on one extra proposition due to Gray and Kieffer \cite{GrayKief}. The proof for Proposition 7 given below may be found in U.Krengel's book \cite{Krengel}. First, recall that for a probability measure space $(\Omega, \mathcal{F}, \nu)$, a measurable map $V: \Omega \to \Omega$ is null-preserving or nonsingular of $\nu(A) = 0$ implies $\nu(T^{-1}(A)) = 0$.

\begin{prop}
    The system $(\Omega, \mathcal{F}, \nu, V)$, for $V$ nonsingular, is asymptotically mean stationary if and only if the averages $\frac{1}{N} \sum_{n=1}^{N} f(V^{n}(x))$ converges $\nu$-almost everywhere for each bounded, measurable function $f$.
\end{prop}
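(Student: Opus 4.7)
The plan is to prove both implications separately. The converse direction is quick: if $\frac{1}{N}\sum_{n=1}^N f(V^n x)$ converges $\nu$-a.e.\ for every bounded measurable $f$, specializing to $f=\mathbbm{1}_A$ and applying bounded convergence (valid since $\nu$ is finite and the averages are uniformly bounded) yields convergence of $\int \frac{1}{N}\sum_{n=1}^N \mathbbm{1}_A(V^n x)\,d\nu = \frac{1}{N}\sum_{n=1}^N \nu(V^{-n}A)$, which is asymptotic mean stationarity.

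For the forward direction, set $\bar{\nu}(A) := \lim_N \frac{1}{N}\sum_{n=1}^N \nu(V^{-n}A)$. The first step is to verify that $\bar{\nu}$ is a countably additive probability measure on $\mathcal{F}$. Finite additivity is immediate from linearity of the averages. For countable additivity, I would apply the Vitali--Hahn--Saks theorem to the sequence $\nu_N(A) := \frac{1}{N}\sum_{n=1}^N \nu(V^{-n}A)$: each $\nu_N$ is a probability measure absolutely continuous with respect to $\nu$ (using nonsingularity of $V$, which gives $\nu(V^{-n}A)=0$ whenever $\nu(A)=0$), and their pointwise limit is therefore countably additive. The second step is to show $\bar{\nu}$ is $V$-invariant: the difference $\bar{\nu}(V^{-1}A) - \bar{\nu}(A) = \lim_N \frac{1}{N}\bigl(\nu(V^{-(N+1)}A) - \nu(V^{-1}A)\bigr) = 0$ by telescoping and boundedness of $\nu$. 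The third step is then to apply the classical Birkhoff ergodic theorem to the measure-preserving system $(\Omega, \mathcal{F}, \bar{\nu}, V)$, which yields, for each bounded measurable $f$, a set $E_f := \{x : \lim_N \frac{1}{N}\sum_{n=1}^N f(V^n x) \text{ exists}\}$ with $\bar{\nu}(E_f) = 1$.

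The final step is to upgrade this to $\nu(E_f) = 1$. Here I would exploit the fact that $E_f$ is \emph{strictly} $V$-invariant, i.e.\ $V^{-1}E_f = E_f$ as sets, because the averages at $x$ and $Vx$ differ only by the boundary term $\frac{1}{N}(f(V^{N+1}x) - f(Vx)) = O(1/N)$ and hence share the same limiting behavior. For any strictly invariant set $A$ one has $\nu(V^{-n}A) = \nu(A)$ for every $n$, whence $\bar{\nu}(A) = \nu(A)$; in particular $\nu(E_f) = \bar{\nu}(E_f) = 1$, as desired. The main obstacle is the first step: passing from a sequence of finitely additive averages to a countably additive limit measure is genuinely delicate, and this is precisely the place where the nonsingularity hypothesis enters in an essential way via Vitali--Hahn--Saks. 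Once $\bar{\nu}$ is confirmed to be a measure, the remainder combines Birkhoff with the strict-invariance observation in a clean, almost formal manner.
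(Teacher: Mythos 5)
Your proposal is correct and follows essentially the same route as the paper: bounded convergence for the easy direction, then Vitali--Hahn--Saks (with nonsingularity supplying $\nu_N \ll \nu$) to get the limit measure $\bar{\nu}$, Birkhoff applied to the $\bar{\nu}$-preserving system, and the strict $V$-invariance of the convergence set to transfer $\bar{\nu}$-fullness back to $\nu$. You merely spell out two steps the paper leaves implicit (the telescoping proof that $\bar{\nu}$ is $V$-invariant and the $O(1/N)$ boundary-term argument for invariance of $E_f$), which is fine.
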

\begin{proof}
    For the reverse direction, note that $\frac{1}{N} \sum_{n=1}^{N} \mathbbm{1}_{A}(T^{n}(x))$ converges for each indicator function $\mathbbm{1}_{A}$ and $A \in \mathcal{F}$. Thus, 
    \begin{equation}
        \int_{\Omega}\frac{1}{N} \sum_{n=1}^{N} \mathbbm{1}_{A}(T^{n}(x)) = \frac{1}{N} \sum_{n=1}^{N} \nu(V^{-n}(A))
    \end{equation}
    converges by the Dominated convergence theorem.

    \medskip

   For the forward direction, assume that $\nu$ is a probability measure. Then, by assumption, $\bar{\nu}(A) = \displaystyle{ \lim_{N \to \infty}} \frac{1}{N} \sum_{n=1}^{N} \nu(V^{-n}(A))$ defines a measure by the Vitali-Hahn-Saks Theorem ($\nu_N (A) = \frac{1}{N} \sum_{n=1}^{N} \nu (V^{-n}(A))$ for all measurable $A$ are absolutely continuous with respect to $\nu$ and converge).

    Consider the set $B_{f}$ of points $\omega$ for which $\frac{1}{N} \sum_{n=1}^{N} f(V^{n}(x))$ converges. The set $B_{f}$ is clearly $V$-invariant and so $\nu(B_f) = \bar{\nu}(B_f)$, where $V$ is measure-preserving with respect to $\bar{\nu}$, so that the Birkhoff-Khinchin theorem gives $\bar{\nu}(B_f) = 1$. 
\end{proof}

\begin{prop}
    Let there exists a finite measure $\mu$ on $\N$ everywhere nonzero such  that $(\N, P(\N), \mu, T)$ is asymptotically mean stationary. Then, $D_2 = \emptyset$. 
\end{prop}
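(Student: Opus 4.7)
The plan is to apply Proposition 7 to translate the AMS hypothesis into pointwise convergence of Birkhoff averages, and then transplant the block construction from the preceding Cesaro proposition into a pointwise oscillation at any hypothetical point of $D_2$. Because $\mu$ is defined on $P(\N)$ and is everywhere nonzero, the only $\mu$-null subset of $\N$ is $\emptyset$; in particular $T$ is trivially nonsingular with respect to $\mu$. Renormalizing $\mu$ to a probability measure if necessary, Proposition 7 then yields, for every bounded measurable $f \colon \N \to \mathbb{R}$, convergence of $\frac{1}{N}\sum_{n=1}^{N} f(T^n(x))$ on a set of full $\mu$-measure; since the everywhere-nonzero hypothesis forces every $\mu$-null set to be empty, this convergence holds at every $x \in \N$.

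I would then argue by contradiction. Suppose $a \in D_2$, fix a chain $H = \{a_z\}_{z \in \Z}$ through $a$ with $a_0 = a$, and set $B_z = \bigcup_{i \geq 0} T^{-i}(T^i(a_z))$ as in the preceding proposition. Because $a \in D_2$ the chain is injective, which makes the $B_z$'s pairwise disjoint and yields the key identity that $a_n \in B_z$ if and only if $n = z$. Hence for any index set $I \subset \Z$, writing $B := \bigcup_{z \in I} B_z$ gives $\mathbbm{1}_B(T^n(a)) = \mathbbm{1}_I(n)$. Transplanting the block construction of $I$ from the preceding proposition produces a set whose density $|I \cap \{1,\dots,N\}|/N$ oscillates between roughly $\frac{1}{2}$ and $\frac{1}{3}$ along two subsequences of $N$; taking $f = \mathbbm{1}_B$, the Birkhoff average at $x = a$ equals exactly this density and hence fails to converge, contradicting the pointwise convergence guaranteed by Proposition 7.

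The main obstacle is the bookkeeping behind the block construction, but here it is purely combinatorial: we no longer need to absorb the tail mass $\sum_{|z|>N}\mu(B_z) < \tfrac{1}{20}$, so the argument simplifies relative to its measure-theoretic predecessor. A secondary point worth checking is that Proposition 7 is stated for probability measures, but renormalization is harmless and the required nonsingularity of $T$ is automatic under the everywhere-nonzero hypothesis.
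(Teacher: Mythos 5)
Your proof is correct, but it takes a genuinely different route from the paper. The paper's proof never invokes the pointwise (Birkhoff) side of Proposition 7 at all: it uses the Hopf decomposition $D_2 = \bigcup_n W_n$ into wandering sets, restricts the AMS limit measure $\bar{\mu}$ (which is $T$-invariant) to $D_2$, and derives the contradiction $m\bigl(\bigcup_{j} T^{-j}(W_n)\bigr) = \sum_j m(W_n) = \infty$ against finiteness. You instead use the forward direction of Proposition 7 to get $\mu$-a.e.\ convergence of Birkhoff averages, upgrade ``a.e.''\ to ``everywhere'' via the everywhere-nonzero hypothesis (so the only null set is $\emptyset$, which also makes nonsingularity automatic, as you note), and then exhibit a bounded indicator whose averages oscillate along the forward orbit of a hypothetical $a \in D_2$. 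The key identity $\mathbbm{1}_B(T^n(a)) = \mathbbm{1}_I(n)$ is valid because the forward orbit of a $D_2$ point is injective; in fact you can dispense with the levels $B_z$ and the backward chain entirely and take $B = \{T^n(a) : n \in I\}$ directly, which makes the bookkeeping trivial --- any $I \subset \N$ with oscillating upper and lower density works, with no tail-mass estimates. What the paper's route buys is that it isolates the structural fact (invariance of $\bar{\mu}$ on a union of wandering sets) that also drives the analogous results in the cited earlier work; what your route buys is a shorter, more transparent argument that exploits the everywhere-nonzero hypothesis exactly where it matters and makes visible why $D_2$ points obstruct asymptotic mean stationarity.
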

\textbf{Remark:} Note that $T$ is nonsingular with respect to $\mu$ in this case. 
\begin{proof}
The Hopf decomposition in \cite{Assanipre} and \cite{SyracuseNonsing} shows that the set $D_2$ is an at-most countable union of wandering sets, or 
\begin{equation}
    D_2 = \bigcup_{n=0}^{\infty} W_{n}
\end{equation}
where $\mu(T^{i}(W_{n}) \cap T^{j}(W_{n})) = \emptyset$ for $i \neq j$. 

Let $m$ be the restriction to $D_2$ of the limiting measure $\bar{\mu}$ in the proof of the above proposition. Note that $m$ is $T$-invariant. If $\mu(D_2) > 0 $, then $m(D_2) > 0$, and in particular, $m(W_{n}) > 0$ for a wandering set $W_{n}$. Then, $m(\bigcup_{j=0}^{\infty} T^{-j}(W_{n})) \leq m(D_2) < \infty$, but $m(\bigcup_{j=0}^{\infty} T^{-j}(W_{n})) = \sum_{j=1}^{\infty} m(W_{n}) = \infty$, a contradiction.
\end{proof}

This direction of the argument, that the existence of a measure implies something about $D_2$, is the more important direction because it allows for statements on the Collatz map. However, the converse is also true. The following argument immediately generalizes from the Collatz map to any Syracuse-type map such that the preimage of a finite set is finite. 

\begin{lem}
    If $D_2$ is empty, there exists an everywhere-nonzero finite measure $\mu$ asymptotically mean stationary with respect to the Collatz map.
\end{lem}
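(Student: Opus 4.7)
The plan is to take $\mu$ to be a weighted sum of point masses with strictly positive, summable weights, for concreteness $\mu = \sum_{n \in \N} 2^{-n} \delta_{n}$. Such a $\mu$ is automatically finite and everywhere nonzero, so the entire content of the lemma reduces to verifying that $(\N, P(\N), \mu, T)$ is asymptotically mean stationary.

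The key observation is that under the hypothesis $D_{2} = \emptyset$, every $n \in \N$ lies in $C \cup D_{1}$, so the forward orbit $\{T^{i}(n)\}_{i \geq 1}$ eventually enters some finite cycle $C(n) \subset C$ of length $\ell(n)$. For any fixed $A \subset \N$, the indicator sequence $\{\mathbbm{1}_{A}(T^{i}(n))\}_{i \geq 1}$ is therefore eventually periodic, and its Cesaro averages converge pointwise to
\[
f_{A}(n) := \lim_{k \to \infty} \frac{1}{k} \sum_{i=1}^{k} \mathbbm{1}_{A}(T^{i}(n)) = \frac{|A \cap C(n)|}{\ell(n)} \in [0,1].
\]

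The remaining step is to assemble these pointwise limits into the global definition of asymptotic mean stationarity. Writing $\mu(T^{-i}(A)) = \sum_{n \in \N} 2^{-n} \mathbbm{1}_{A}(T^{i}(n))$ and applying Tonelli to interchange the sum over $n$ with the $i$-average,
\[
\frac{1}{k} \sum_{i=1}^{k} \mu(T^{-i}(A)) = \sum_{n \in \N} 2^{-n} \left( \frac{1}{k} \sum_{i=1}^{k} \mathbbm{1}_{A}(T^{i}(n)) \right).
\]
Each inner average lies in $[0,1]$, so the summands are dominated by the summable function $n \mapsto 2^{-n}$. The dominated convergence theorem for series then yields $\lim_{k \to \infty} \frac{1}{k} \sum_{i=1}^{k} \mu(T^{-i}(A)) = \sum_{n \in \N} 2^{-n} f_{A}(n)$, a limit that exists for every $A \subset \N$, which is precisely the property required.

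None of the steps here is really an obstacle: once one realizes that for a point-mass measure the question of asymptotic mean stationarity reduces to the existence of Cesaro averages of orbit indicators, and that the hypothesis $D_{2} = \emptyset$ delivers exactly this eventual periodicity, the argument is a routine dominated-convergence computation. The only structural choice that matters is that the weights decay fast enough for the interchange of summation and limit to be valid, which is why a geometric assignment such as $2^{-n}$ is used rather than an arbitrary positive one; any summable sequence with strictly positive entries would serve equally well.
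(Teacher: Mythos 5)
Your proof is correct, but it takes a genuinely different route from the paper's. You pick a generic geometric point-mass measure $\mu = \sum_{n} 2^{-n}\delta_{n}$ and reduce asymptotic mean stationarity to the pointwise convergence of the Cesaro averages $\frac{1}{k}\sum_{i=1}^{k}\mathbbm{1}_{A}(T^{i}(n))$, which follows because $D_{2}=\emptyset$ forces every orbit to be eventually periodic; dominated convergence (with the constant $1$ as dominating function against the finite measure) then passes the limit through the sum over $n$. The paper instead builds a bespoke measure adapted to the cycle/branch structure: it assigns mass $\frac{1}{2^{i+2}n_i}$ to each element of the $i$-th cycle, halves the mass along successive preimage levels of the $D_{1}$ branches, and then computes the Cesaro limits directly by splitting $A = (A\cap C)\cup(A\cap D_{1})$ (the $D_{1}$ part contributes $0$ since $\sum_{j}\mu(T^{-j}(A))<\infty$ there, and the cycle part is an explicit calculation). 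Your argument is shorter and actually proves something stronger -- it shows that when $D_{2}=\emptyset$ \emph{every} finite measure on $\N$ is asymptotically mean stationary with respect to $T$, and it does not even use the paper's standing hypothesis that preimages of finite sets are finite, only that every forward orbit eventually enters a finite cycle. What the paper's construction buys in exchange is an explicit limiting measure concentrated on the cycles (the natural $T$-invariant measure $\bar{\mu}$ appearing in their Proposition on the Hopf decomposition), which is the object of interest in the converse direction; your $f_{A}(n)$ formula recovers the same invariant limit $\bar{\mu}(A)=\sum_{n}2^{-n}|A\cap C(n)|/\ell(n)$, so nothing essential is lost.
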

\begin{proof}

    Recall the decomposition $N = C \cup D_1 \cup D_2$ (where we assume $D_2 = \emptyset$ here). Let $C = \bigcup_{i=1}^{\infty} C_i$ where fore each $i$, $C_i = \{c_1,..,c_{n_i}\}$ is a cycle. 
    
    Then we construct $\mu$ to be a probability measure. Set $\mu(c_1) = ... = \mu(c_{n_i}) = \frac{1}{2^{i+2}n_i}$. Then, for $k \geq 0$, we take the  $ \mu(T^{-k}(T^{-1}(c_j) \backslash C_i)) = \frac{1}{2} \mu(T^{1-k}(T^{-1}(c_j) \backslash C_i))$ where all values in this set have equal measure.  This is to say that we consider the branch of $D_1$ mapping to each $c_j$ in this cycle (without intersecting the cycle elsewhere before mapping to $c_j$) and weight these equally across the $c_j$ values. It is immediate that $\mu(\N) = \sum_{i=1}^{\infty} 2\mu(C_i) = 1$.

    Consider first $A \subset D_1$. Then, $\sum_{j=0}^{\infty} \mu(T^{-j}(A)) < \infty$ implies 
    \begin{equation}
        \displaystyle{\lim_{N \to \infty}} \frac{1}{N} \sum_{j=0}^{N} \mu(T^{-j}(A)) = 0
    \end{equation}
Next, assume that $A$ is a subset of a single cycle $C_i = \{c_1,...,c_{n_i}\}$. We have that
    \begin{align}
    \displaystyle{\lim_{N \to \infty}} \frac{1}{N} \sum_{j=0}^{N} \mu(T^{-j}(A \cap C_i)) = \displaystyle{\lim_{N \to \infty}} \frac{1}{N} \sum_{j=0}^{N} \mu(T^{-j}(\{c_{j_1},...,c_{j_l}\})) \\ 
    = \displaystyle{\lim_{N \to \infty}} \frac{j_l}{N}\left(\sum_{m=0}^{N} \frac{2^{-m}(N-m)}{2^{i+2}n_i} \right) =j_{l}( \sum_{m=0}^{\infty} \frac{2^{-m}}{2^{i+2}n_i}) = \frac{j_{l}}{2^{i+1}n_i} 
\end{align}
The argument immediately generalizes to any subset of $C$ with a bit more algebra in computing the actual limit. Therefore, since any subset of the natural numbers may be decomposed $A = (A \cap C) \cup (A \cap D_1)$, $\mu$ is asymptotically mean stationary with repect to $T$.
\end{proof}

\bigskip

\bibliographystyle{plain}
\bibliography{main}{}

\bigskip
\bigskip
\bigskip

\end{document}